\title{The Farrell-Jones conjecture for Graph Products}
\author{Giovanni Gandini} 
\address{Institut for Matematiske Fag\\K{\o}benhavns Universitet\\\newline
         Universitetsparken 5, 2100 K{\o}benhavn \O\\Denmark}
\email{ggandini@math.ku.dk}
\urladdr{http://www.math.ku.dk/~zjb179}
\author{Henrik R\"uping} 
\address{Mathematisches Institut\\ Rheinische Wilhelms-Universit\"{a}t Bonn\\\newline
 Endenicher Allee 60, 53115 Bonn\\ Germany}
\email{henrik.rueping@hcm.uni-bonn.de}
\urladdr{http://www.math.uni-bonn.de/people/rueping}
\newtheorem{thm}{Theorem}[section]
\theoremstyle{definition}
\newtheorem{definition}[thm]{Definition}
\newtheorem{example}[thm]{Example}
\newtheorem{lemma}[thm]{Lemma} 
\newtheorem{prop}[thm]{Proposition}
\newtheorem*{rem}{Remark}
\newcommand{\ignore}[1]{}
\DeclareMathOperator {\IZ}{\mathbb{Z}}
  \DeclareMathOperator{\colim}{colim}
  \DeclareMathOperator {\calg}{\mathcal{G}}
  \DeclareMathOperator {\calc}{\mathcal{C}}
  \DeclareMathOperator {\calh}{\mathcal{H}}
  \DeclareMathOperator {\Aut}{Aut}
\numberwithin{equation}{section}
\begin{document}
\ignore{
%------------------------------------------------------------------------------
% Beginning of journal.tex
%------------------------------------------------------------------------------
%
% AMS-LaTeX version 2 sample file for journals, based on amsart.cls.
%
%        ***     DO NOT USE THIS FILE AS A STARTER.      ***
%        ***  USE THE JOURNAL-SPECIFIC *.TEMPLATE FILE.  ***
%
% Replace amsart by the documentclass for the target journal, e.g., tran-l.
%
\documentclass[a4paper]{amsart}
\usepackage{enumerate}
\usepackage{pb-diagram}
\usepackage{microtype}
\parskip1ex plus.7ex minus.3ex %spazia di una riga 
\parindent0pt
\usepackage{amssymb, pb-diagram}
\linespread{1.13} 
\usepackage[all]{xy}
\usepackage{graphicx} 
\usepackage{mathabx}
\usepackage{chapterbib}
\usepackage{epsfig}
\usepackage{amsfonts}
\usepackage{amssymb}
\usepackage{amsmath}   
\usepackage{amsthm}
\usepackage{color}
\usepackage{latexsym}
\usepackage{tikz}
\usepackage{hyperref} 
\usepackage{xcolor} %%Only for colorful comments, can be deleted in the final version
\newtheorem{thm}{Theorem}
\newtheorem*{thmi}{Theorem}
\newtheorem{corr}{Correction}
\newtheorem{prop}[thm]{Proposition}
\newtheorem{lemma}[thm]{Lemma}
\newtheorem{cor}[thm]{Corollary}
 \theoremstyle{definition}
  \newtheorem{definition}[thm]{Definition}
    \newtheorem{conj}{Conjecture}[section] 
 \newtheorem{question}{Question}[section]
  \newtheorem{example}[thm]{Example}
  \newtheorem{examples}{Examples}
    \newtheorem{rem}[thm]{Remark}
  \DeclareMathOperator {\IZ}{\mathbb{Z}}
  \DeclareMathOperator{\colim}{colim}
  \DeclareMathOperator {\calg}{\mathcal{G}}
  \DeclareMathOperator {\calc}{\mathcal{C}}
  \DeclareMathOperator {\calh}{\mathcal{H}}
  \DeclareMathOperator {\Aut}{Aut}

\numberwithin{equation}{section}

%    Absolute value notation
\newcommand{\abs}[1]{\lvert#1\rvert}
\newcommand{\ignore}[1]{}

\newcounter{commentcounter}

\newcommand{\commenthe}[1]
{\stepcounter{commentcounter}
   \textbf{Comment \arabic{commentcounter} (by Henrik)}: 
{\textcolor{brown}{#1}} }

\newcommand{\commentg}[1]
{\stepcounter{commentcounter}
   \textbf{Comment \arabic{commentcounter} (by Giovanni)}: 
{\textcolor{blue}{#1}} }

%    Blank box placeholder for figures (to avoid requiring any
%    particular graphics capabilities for printing this document).
\newcommand{\blankbox}[2]{%
  \parbox{\columnwidth}{\centering
%    Set fboxsep to 0 so that the actual size of the box will match the
%    given measurements more closely.
    \setlength{\fboxsep}{0pt}%
    \fbox{\raisebox{0pt}[#2]{\hspace{#1}}}%
  }%
}

\begin{document}
}%ignore

%    General info
%\subjclass[2010]{Primary ?, ?}
%\date{\today}
%\date{\today}

%\dedicatory{This paper is dedicated to my advisor.}

%\keywords{Group theory, algebraic topology}

\begin{abstract} We show that the class of groups satisfying the K- and L-theoretic  Farrell-Jones conjecture  is closed under taking  graph products of groups. 

\end{abstract}

\maketitle
%\subsection* {Acknowledgements}

%\section{introduction}

A group $G$ satisfies  the \emph{K-theoretic Farrell-Jones conjecture with coefficients in  additive categories}  if for any additive $G$-category $\mathcal{A}$ the assembly map (induced by the projection $E_{\mathcal{VC}yc}G \to \mbox{pt}$)
$$\mbox{asmb}_n^{G, \mathcal{A}} : H_n^G(E_{\mathcal{VC}yc}G; \bold{K_{\mathcal{A}} })\to  H_n^G(\mbox{pt}; \bold{K_{\mathcal{A}} })$$
 is an isomorphism.
 There is an analogous L-theoretic version of the Farrell-Jones conjecture where the K-theory of $\mathcal{A}$ is replaced by the L-theory of $\mathcal{A}$ with  decoration $\langle-\infty\rangle$. We say that a group $G$ satisfies the $\mathbf{FJC}$ if  for any finite group $F$ the group  $G\wr F$ satisfies the K- and L-theoretic Farrell-Jones conjecture with coefficients in any additive category.
 The Farrell-Jones conjecture is very powerful, in fact it implies many famous conjectures; for example the Bass, the Borel, the Kaplansky and the Novikov conjecture \cite{BC}. The Farrell-Jones conjecture for K-theory up to dimension one implies that  the reduced projective class group $\Tilde{K}_0(RG)$ vanishes  whenever $R$ is a principal ideal domain and $G$ is torsion-free. When $R$ is the ring of integers the vanishing of  $\Tilde{K}_0(\mathbb{Z} G)$ implies Serre's conjecture, which claims that every group of type $\mbox{FP}$ is of type $\mbox{FL}$. \\ 
 Given a simplicial graph $\Gamma$ and a family of groups $\mathfrak{G} = \{G_v | v \in V \Gamma\}$, the \emph{graph product} $\Gamma\mathfrak{G}$ is the quotient of  free product $\bigast_{v\in V\Gamma} G_v $ by the relations
$[g_u,g_v]=1$ for all $g_u \in G_u$, $g_v \in G_v $ whenever $\{u, v\}\in E\Gamma$. Basic examples are given by right-angled Artin and right-angled Coxeter groups. \\
 Let $\calc$ be the class of groups satisfying the $\mathbf{FJC}$. The purpose of this note is to prove the following closure property for the class $\calc$.
 
\begin{thm}\label{main} The class $\calc$ is closed under taking graph products.
\end{thm}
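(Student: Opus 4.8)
The plan is to combine three standard inheritance properties of the class $\calc$ with the elementary fact that a graph product is an iterated amalgamated free product of its vertex groups. The properties I will use are: $\calc$ is closed under passing to subgroups; $\calc$ is closed under finite direct products; and $\calc$ is closed under fundamental groups of finite graphs of groups all of whose vertex groups lie in $\calc$ (equivalently, a group acting cocompactly on a tree with all stabilizers in $\calc$ again lies in $\calc$). All three are among the standard inheritance properties established for $\mathbf{FJC}$, i.e.\ in the version stabilised under forming finite wreath products — which is precisely why $\calc$ was defined with the wreath-product clause.

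First I would reduce to a finite defining graph. Every element and every relator of $\Gamma\mathfrak{G}$ involves only finitely many vertices, so $\Gamma\mathfrak{G}$ is the directed colimit, over the finite full subgraphs $\Gamma'\subseteq\Gamma$, of the subgroups $\Gamma'\mathfrak{G}$; the structure maps are injective because the inclusion of a full subgraph admits a retraction (send the generators outside the subgraph to $1$ — this respects the commutator relations exactly because the subgraph is full). As $\calc$ is closed under directed colimits, it suffices to treat finite $\Gamma$.

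For finite $\Gamma$ I would induct on $|V\Gamma|$, the cases $|V\Gamma|\le 1$ being trivial. For the inductive step choose a vertex $v$, write $L=\operatorname{lk}(v)$ for its set of neighbours, and let $\Gamma_L$ and $\Gamma\setminus v$ be the full subgraphs on $L$ and on $V\Gamma\setminus\{v\}$ respectively. Comparing presentations, one verifies the amalgamated free product decomposition
\[
\Gamma\mathfrak{G}\;\cong\;(\Gamma\setminus v)\mathfrak{G}\;*_{\Gamma_L\mathfrak{G}}\;\bigl(G_v\times\Gamma_L\mathfrak{G}\bigr),
\]
in which the two copies of the edge group $\Gamma_L\mathfrak{G}$ sit inside the two factors via the canonical maps — injective for the left factor by the full-subgraph retraction just mentioned, and obviously injective for the right factor. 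Since $\Gamma_L$ and $\Gamma\setminus v$ each have fewer than $|V\Gamma|$ vertices, both $\Gamma_L\mathfrak{G}$ and $(\Gamma\setminus v)\mathfrak{G}$ lie in $\calc$ by the inductive hypothesis; hence $G_v\times\Gamma_L\mathfrak{G}\in\calc$ by closure under finite direct products, using $G_v\in\calc$. Thus $\Gamma\mathfrak{G}$ is the fundamental group of a one-edge graph of groups whose two vertex groups lie in $\calc$, so $\Gamma\mathfrak{G}\in\calc$, which closes the induction.

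The only step carrying genuine content is the last invocation: that $\calc$ is closed under amalgamated free products, or equivalently under cocompact actions on trees with stabilizers in $\calc$. That is where the whole geometric apparatus behind the Farrell--Jones conjecture is hidden (it rests on the transfer/flow-space machinery), and it is the step I would expect a referee to examine; everything else is bookkeeping with group presentations, the elementary geometry of graph products, and citations of well-documented closure properties. A secondary point to be careful about is to quote each inheritance statement in its $\mathbf{FJC}$ (wreath-product-stable) form, so that the induction actually stays inside $\calc$ at every stage.
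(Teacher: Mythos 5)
Your reduction to finite graphs via directed colimits and your amalgam decomposition $\Gamma\mathfrak{G}\cong(\Gamma\setminus v)\mathfrak{G}*_{\Gamma_L\mathfrak{G}}(G_v\times\Gamma_L\mathfrak{G})$ are both correct, but the step you lean on to finish --- that $\calc$ is closed under amalgamated free products, equivalently under fundamental groups of finite graphs of groups (cocompact actions on trees) with vertex groups in $\calc$ --- is not among the established inheritance properties, and you cannot cite it. The known closure property is for \emph{free} products, i.e.\ trivial edge groups (Proposition~\ref{lem:FJinh}(\ref{lem:FJinh:fprod})), and even that is proved by a special argument: one maps $G_1*G_2\to G_1\times G_2$ and observes that preimages of cyclic subgroups act on the Bass--Serre tree with trivial edge stabilizers, hence are free. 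For a general amalgam over an infinite subgroup such as $\Gamma_L\mathfrak{G}$ no such reduction is available; closure of the Farrell--Jones conjecture under arbitrary amalgamated products (or graphs of groups with vertex groups satisfying the conjecture) was not a theorem at the time and is exactly the kind of statement that cannot be dismissed as a ``well-documented closure property.'' Note also that the relevant tree action is far from acylindrical here, since $G_v$ centralizes the edge group, so one cannot appeal to hyperbolicity-type results either. In short, the only step of your argument ``carrying genuine content,'' as you put it, is precisely the unproved one; if that closure property were available, the theorem would be a triviality and the paper would have no content.

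The paper takes a different route designed to avoid this gap: it applies the homomorphism criterion (Proposition~\ref{lem:FJinh}(\ref{lem:FJinh:hom})) to the retraction $\pi:\Gamma\mathfrak{G}\to\Gamma'\mathfrak{G}$ killing the vertex group $G_v$, where $\Gamma'=\Gamma\setminus v$. By induction the target lies in $\calc$, and the task becomes showing that $\pi^{-1}(C)$ lies in $\calc$ for every cyclic subgroup $C$ of $\Gamma'\mathfrak{G}$. Lemma~\ref{permut} identifies $\pi^{-1}(C)$ with $\bigl(\bigast_{\Gamma'\mathfrak{G}/\Gamma''\mathfrak{G}}G_v\bigr)\rtimes C$, with $C$ permuting the free factors, and the real work of the paper (Lemmas~\ref{lem:balmapstohyp}--\ref{lem:PermFactorsIsOK}, via balanced graphs of groups with trivial or infinite cyclic vertex groups) establishes the $\mathbf{FJC}$ for such semidirect products. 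If you want to salvage your outline, you would have to replace your final invocation by an argument of this kind; as written, the proof does not go through.
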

Since right-angled Artin and right-angled Coxeter groups are CAT(0)-groups they satisfy $\mathbf{FJC}$ by  \cite[Theorem~B]{bartels2012borel}.

\subsubsection*{Acknowledgements}We want to thank Yago Antol\'in and Sebastian Meinert for their comments. This work was financed by the Leibniz-Award of Prof. Dr. Wolfgang L\"uck granted by the Deutsche Forschungsgemeinschaft.

\section{Preliminares }
Let us start by recalling some recent results.
\begin{prop}[\cite{bartels2008k, bartels2011farrell, bartels2012borel, bartels2012k}]\label{lem:FJinh} The class of groups $\calc$ has the following properties:
\begin{enumerate}
\item \label{lem:FJinh:hyp}Word-hyperbolic groups belong to $\calc$.
\item \label{lem:FJinh:CAT}$\ignore{}CAT$(0)-groups belong to $\calc$.
\item \label{lem:FJinh:poly} Virtually polycyclic groups belong to $\calc$.
\item \label{lem:FJinh:fext}The class $\calc$ is closed under taking finite index overgroups.
\item \label{lem:FJinh:sub}The class $\calc$ is closed under taking subgroups.
\item \label{lem:FJinh:colim} The class $\calc$ is closed under taking directed colimits.
\item \label{lem:FJinh:prod}The class $\calc$ is closed under taking direct sums.
\item \label{lem:FJinh:hom} Given any group homomorphism $f:G\rightarrow H$ and assume that $H\in \calc$ and $f^{-1}(Z)\in \calc$ for any torsion-free cyclic subgroup $Z$ of $H$. Then, $G$ belongs to  $\calc$.
\item \label{lem:FJinh:fprod}The class $\calc$ is closed under taking  free products.
\end{enumerate}
\end{prop}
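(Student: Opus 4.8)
The plan is to read off each item from the existing literature on the Farrell-Jones conjecture, sorting the list into the deep geometric base cases, which I would simply cite, and the formal closure properties, which I would derive from the general inheritance machinery. The organising remark is that the class $\calc$ is defined not by the bare conjecture for $G$ but by the conjecture for every finite wreath product $G \wr F$; this decoration is precisely what makes the class closed under passage to subgroups and, more importantly, to finite-index overgroups, operations under which the naive conjecture is not known to behave well. I would make this remark explicit at the outset and then use the wreath-product quantifier throughout the formal steps. Note also that, by the definition of $\calc$, whenever $H \in \calc$ the group $H \wr K$ satisfies the K- and L-theoretic conjecture for every finite $K$, a fact I shall use repeatedly.

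For the base cases I would invoke the hard theorems directly. Word-hyperbolic groups (1) and CAT(0)-groups (2) are the two fundamental geometric inputs, proved by Bartels-L\"uck-Reich and Bartels-L\"uck in \cite{bartels2008k, bartels2011farrell, bartels2012borel}; their proofs establish transfer reducibility by constructing long, thin equivariant covers of an associated flow space, and the finite-wreath-product strengthening is already built into their statements, so nothing further is needed. Virtually polycyclic groups (3) I would obtain painlessly from the fact that every polycyclic-by-finite group embeds in $GL_n(\mathbb{Z})$ for some $n$ (Auslander-Swan), together with $GL_n(\mathbb{Z}) \in \calc$ from \cite{bartels2012k} and subgroup-closure (5).

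The formal closure properties split further. Closure under subgroups (5) and under directed colimits (6) already hold for the plain K- and L-theoretic conjecture with additive coefficients --- restriction of the coefficient category handles subgroups, and (6) is the inheritance-under-colimits theorem --- and they pass through the wreath-product quantifier since $G \wr F$ is a subgroup of, respectively a directed colimit of the corresponding wreath products of the pieces. Closure under direct sums (7) reduces via (6) to finite direct products, and free products (9) I would treat through Bass-Serre theory, realising $A \ast B$ as acting on a tree with trivial edge stabilisers and applying the graph-of-groups inheritance results of \cite{bartels2011farrell, bartels2012k}. The one formal step where the wreath-product definition is indispensable is finite-index overgroups (4): if $H \le G$ has finite index $n$ then $G$ embeds into $H \wr \mathrm{Sym}(n)$ via the coset action, hence $G \wr F$ embeds into $H \wr (\mathrm{Sym}(n) \wr F)$ by associativity of the wreath product, and since $\mathrm{Sym}(n) \wr F$ is finite, $H \in \calc$ forces $G \in \calc$ by (5).

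The genuinely load-bearing inheritance statement, and the step I expect to be the main obstacle, is the fibering principle (8). Here the mechanism is the transitivity principle: $G$ satisfies the conjecture for the family of virtually cyclic subgroups provided it satisfies the conjecture relative to the pullback family $\{f^{-1}(V) \mid V \le H \text{ virtually cyclic}\}$ and each group in that family itself lies in $\calc$, while $H \in \calc$ supplies the conjecture along the base. The hypothesis on torsion-free cyclic preimages is exactly what feeds the second requirement. I expect the real care to lie in tracking the additive-category coefficients and the L-theory decoration $\langle-\infty\rangle$ through the relative assembly map, and in keeping the wreath-product quantifier intact at every stage; all of this is carried out in \cite{bartels2011farrell}, so within this note (8), together with the geometric inputs (1) and (2), are the results that do the actual work and are cited accordingly.
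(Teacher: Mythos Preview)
Your proposal is broadly correct and tracks the paper's proof closely for items (1), (2), (4), (5), (6), (7). Two items are handled by genuinely different routes, and one step has a small gap worth noting.

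For (3), the paper simply observes that the wreath product of a virtually polycyclic group with a finite group is again virtually polycyclic and then invokes \cite{bartels2011farrell} directly; your route through the Auslander--Swan embedding into $GL_n(\mathbb{Z})$ and \cite{bartels2012k} is equally valid but imports a heavier theorem than needed.

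For (9), the paper does \emph{not} cite an external graph-of-groups inheritance result. Instead it derives the free-product case internally from the items already on the list: apply the fibering principle (8) to the natural surjection $\pi\colon G_1\ast G_2 \to G_1\times G_2$, the target lying in $\calc$ by (7); then for any infinite cyclic $C\le G_1\times G_2$ the preimage $\pi^{-1}(C)$ acts on the Bass--Serre tree with trivial edge stabilisers and cyclic-or-trivial vertex stabilisers, hence is free and lies in $\calc$ by (1). Your appeal to ``graph-of-groups inheritance results of \cite{bartels2011farrell, bartels2012k}'' is not what those references contain in the wreath-product setting, and indeed the later sections of this very paper are devoted to establishing such inheritance in special cases. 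The paper's internal derivation is both cleaner and avoids a questionable citation.

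For (8), you correctly identify the transitivity principle, but you skip the one sentence that bridges the hypothesis to what is actually needed: the pullback family consists of $f^{-1}(V)$ for $V$ \emph{virtually} cyclic, whereas the hypothesis only controls $f^{-1}(Z)$ for $Z$ torsion-free cyclic. The paper fills this by choosing a finite-index torsion-free cyclic $Z\le V$, so that $f^{-1}(Z)$ has finite index in $f^{-1}(V)$, and then invoking (4). Without this remark your account of (8) does not quite close.
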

\begin{proof}
The K-theoretic version without wreath products for hyperbolic groups was proved in \cite[Main~Theorem]{bartels2008k}. The L-theoretic version without wreath products for hyperbolic groups was proved in \cite[Theorem~B]{bartels2012borel}. This generalizes to wreath products of hyperbolic groups with finite groups as explained in \cite[Remark~6.4]{bartels2012k}.\\
First note that a wreath product of a $\ignore{}CAT(0)$ group and a finite group also acts geometrically on a $CAT(0)$ space. Thus it suffices to consider the setting without wreath product which was proved in \cite[Theorem~B]{bartels2012borel}.\\
A wreath product of a virtually polycyclic group with a finite group is again virtually polycyclic.
In  \cite[Theorem~0.1]{bartels2011farrell} it was proved that virtually polycyclic groups satisfy Farrell-Jones.\\
By \cite[Remark~0.5]{bartels2011farrell} the Farrell-Jones conjecture with finite wreath products passes to finite index overgroups.\\
The remaining properties  can be found in \cite[Theorems~1.7-1.10]{bartels2011farrell} without wreath products. 
Note that if $H$ is a subgroup of $G$ then $H \wr F\leq G\wr F$, and hence it satisfies $\mathbf{FJC}$.\\
We have for a direct system of groups $(G_i)_{i\in I}$ and a finite group $F$ that 
$(\colim_{i\in I}G_i) \wr F$ is isomorphic to $\colim_{i\in I}(G_i\wr F)$
and hence the $\mathbf{FJC}$ is also compatible with colimits. \\
Note that  the wreath product of a direct sum by a finite group $F$ embeds in the direct sum of wreath  products of the factors by $F$, hence we obtain that the $\mathbf{FJC}$ passes to finite direct sums. Hence by the previous point we have that $\mathcal{C}$ is closed under taking arbitrary direct sums.\\
Let $f:G\rightarrow H$ be a group homomorphism such that $H$ and $f^{-1}(Z)$ satisfy the $\mathbf{FJC}$ for any torsion-free cyclic subgroup $Z$ of $H$. Let $V$ be a virtually cyclic subgroup of $H$. Then it has a finite index torsion-free subgroup $Z$. Thus $f^{-1}(Z)$ has finite index in $f^{-1}(V)$. So $f^{-1}(V)$ also satisfies $\mathbf{FJC}$ by \eqref{lem:FJinh:fext}.\\
The same argument given in \cite[Lemma 3.16]{2009ph} for the fibered version of the Farrell-Jones conjecture applies to the $\mathbf{FJC}$.\\
Consider the surjection $\pi : G_1* G_2 \to G_1 \times G_2$ and a torsion-free cyclic subgroup $C$ of $G_1 \times G_2$.  Note that by \eqref{lem:FJinh:prod} $G_1 \times G_2$ satisfies $\mathbf{FJC}$ and the subgroup $\pi^{-1}(C)$ acts on the Bass-Serre tree of $G_1\ast G_2$ with trivial edge stabilizers. Since for all $g\in G_1\ast G_2$,  $G_i^g\cap \pi^{-1}(C)$ is either trivial or infinite cyclic, we have that $\pi^{-1}(C)$ is a free group and hence  it satisfy $\mathbf{FJC}$ by \eqref{lem:FJinh:hyp}. 
 \end{proof}
 In order to prove Theorem \ref{main} we need to show that the fundamental groups of certain graphs of groups satisfy $\mathbf{FJC}$.
\section{Some special graphs of groups}

Basic facts about groups acting on trees and graphs of groups can be found in \cite{serre1980trees}{}. Let us briefly recall how it is possible to recover a presentation for the fundamental group of a graph of groups $\calg =(X,G_?,i_*:G_*\hookrightarrow G_{t(*)})$. Pick a maximal tree $T\subset X$ and pick an orientation of each edge that is not in $T$. Let $L$ denote the set of those edges. Let $F(L)$ be the free group generated by $L$ and let $t_e$  be the generator corresponding to an edge $e\in L$. Then $\pi_1(\calg)$ is defined to be
$\bigast_{x\in V(X)} G_x * F(L)$ modulo the relations
\[ i_e(g)=i_{\overline{e}}(g) \mbox{ for }e\in T, g\in G_e \qquad t_ei_e(g)t_e^{-1} = i_{\overline{e}}(g) \mbox{ for }e\notin T, g \in G_e.\]
We have a canonical map $G_v\rightarrow \pi_1(\calg)$ for each vertex $v$. These  turn out to be injective. Furthermore we can also get canonical maps $G_e\hookrightarrow \pi_1(\calg)$. We view $G_e$ as a subgroup of $G_{t(e)}\subset \pi_1(\calg)$ if either $e\in T$ or $e\notin T$ and $e$ has the chosen orientation.

Graphs of groups arise as quotients of groups acting on trees. The edge and vertex groups are given by the stabilizers. Every graph of groups arises uniquely in  this way \cite[Section I.5.3]{serre1980trees}. The acting group is isomorphic to the fundamental group of the induced graph of groups. This is the statement of the fundamental theorem for groups acting on trees \cite[Theorem~13, Section~I.5.4]{serre1980trees}. Thus any subgroup of $\pi_1(\calg)$ inherits a decomposition as a fundamental group of a graph of groups.

\begin{definition}\label{bala} Let $\calg$ be a countable graph of groups where every vertex group is either trivial or infinite cyclic.  Since the edge groups inject into the vertex groups they are also either trivial or infinite cyclic. For every infinite stabilizer $G_x$  of a vertex $x$ pick a   generator $z_x$. Similarly, for every edge $e$ with non trivial stabilizer $G_e$ let $z_e$ be the generator of $G_e$. For such an edge  $e$ let $n_e\in\IZ$ be defined by $i_e(z_e)=z_{t(e)}^{n_e}$.\\
A closed edge loop $e_1,\ldots,e_m$  where each edge group is infinite cyclic is said to be \emph{balanced}, if 
\[\prod_{i=1}^m n_{e_i} = \prod_{i=1}^mn_{\overline{e_i}}.\]
Note that picking different generators does not change balancedness since every minus-sign appears twice.
The graph of groups is said to be \emph{balanced} if every closed edge loop with infinite cyclic edge stabilizers is balanced.
\end{definition}
\begin{example} The Baumslag-Solitar group $BS(m, n)=\langle a,b \,|\, b^{-1}a^{m}b =a^{n}\rangle$ is the fundamental group of a balanced graph of groups if and only if $m=n$.
\end{example}
\begin{lemma}\label{lem:balmapstohyp}
Let $\calg$ be a finite balanced graph of groups with infinite cyclic vertex and edge groups.  Then there is a central infinite cyclic subgroup $N\leq \pi_1(\calg)$ such that $\pi_1(\calg)/N$ is isomorphic to the fundamental group of a finite graph of finite groups.
\end{lemma}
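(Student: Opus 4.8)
The plan is to produce the central subgroup explicitly as a power of one fixed vertex generator, and then to identify the quotient with the fundamental group of a graph of finite groups supported on the same underlying graph. First I would fix a maximal tree $T\subseteq X$, a basepoint $v_0\in V(X)$, and the data $z_x$, $n_e$ of Definition \ref{bala}; since each edge group injects into a vertex group, every $n_e$ is a nonzero integer. For a vertex $x$ let $A_x$, respectively $B_x$, be the product of the integers $n_{\overline f}$, respectively $n_f$, as $f$ runs over the edges of the geodesic in $T$ from $v_0$ to $x$, each oriented away from $v_0$; thus $A_{v_0}=B_{v_0}=1$ and all $A_x,B_x$ are nonzero. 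Using only the tree relations $z_{t(e)}^{n_e}=z_{t(\overline e)}^{n_{\overline e}}$ (for $e\in T$), a straightforward telescoping argument along this geodesic yields the key identity
\[ z_{v_0}^{A_x}=z_x^{B_x}\quad\text{in }\pi_1(\calg)\text{, for every vertex }x. \]

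Next I would fix a positive integer $M$ divisible by $|n_e|\cdot A_{t(e)}$ for every oriented edge $e$ of $X$; this is possible since $X$ is finite, and it forces $A_x\mid M$ for every vertex $x$ as well as $n_e\mid B_{t(e)}\cdot(M/A_{t(e)})$ for every oriented edge $e$. Set $N=\langle z_{v_0}^M\rangle$. Since the vertex groups embed in $\pi_1(\calg)$, the element $z_{v_0}$ has infinite order, so $N\cong\IZ$. It remains to check $N$ is central, and since $\pi_1(\calg)$ is generated by the $z_x$ together with the stable letters $t_e$ ($e\notin T$), it is enough to check $z_{v_0}^M$ commutes with each of these. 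Writing $c_x:=M/A_x$, the identity above gives $z_{v_0}^M=z_x^{B_xc_x}$ for every $x$, so $z_{v_0}^M$ lies in every vertex group and in particular commutes with every $z_x$. For a non-tree edge $e$ set $u=t(e)$, $w=t(\overline e)$; the corresponding relation is $t_ez_u^{n_e}t_e^{-1}=z_w^{n_{\overline e}}$, and using $n_e\mid B_uc_u$ one computes
\[ t_e\, z_{v_0}^M\, t_e^{-1}=t_e\,(z_u^{n_e})^{B_uc_u/n_e}\,t_e^{-1}=(z_w^{n_{\overline e}})^{B_uc_u/n_e}=z_w^{\,n_{\overline e}B_uc_u/n_e}, \]
while $z_{v_0}^M=z_w^{B_wc_w}$; as $z_w$ has infinite order these agree iff $n_{\overline e}B_uA_w=n_eB_wA_u$. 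This is exactly the balancedness of the closed edge loop running along the geodesic in $T$ from $v_0$ to $w$, then along $e$ from $w$ to $u$, then along the geodesic in $T$ from $u$ back to $v_0$: the forward product of the $n$'s along this loop is $B_w\,n_e\,A_u$ and the backward product is $A_w\,n_{\overline e}\,B_u$, and since integers commute the balancedness equation $B_wn_eA_u=A_wn_{\overline e}B_u$ is equivalent to the displayed requirement. Hence $z_{v_0}^M$ is central.

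For the quotient I would let $\overline{\calg}$ be the graph of groups with underlying graph $X$, maximal tree $T$, vertex groups $\overline{G_x}=G_x/(N\cap G_x)$, edge groups $\overline{G_e}=G_e/(N\cap G_e)$, and structure maps induced by the $i_e$. From $z_{v_0}^M=z_x^{B_xc_x}$ one reads off $N\cap G_x=\langle z_x^{B_xc_x}\rangle$, a nontrivial subgroup of $G_x\cong\IZ$, so each $\overline{G_x}$ is finite cyclic; the divisibilities $n_e\mid B_uc_u$ guarantee $N$ meets the edge groups compatibly, i.e. $i_e^{-1}(N\cap G_{t(e)})=N\cap G_e$, so the induced maps $\overline{G_e}\hookrightarrow\overline{G_{t(e)}}$ stay injective, and $\overline{\calg}$ is a finite graph of finite groups. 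Finally, the standard presentation of $\pi_1(\calg)$ (generators $z_x$, $t_e$, subject to the edge relations of $\calg$) shows that $\pi_1(\calg)/N$ is obtained by additionally imposing $z_{v_0}^M=1$; since $z_{v_0}^M=z_x^{B_xc_x}$ is a consequence of the tree relations, this is the same as imposing $z_x^{|B_xc_x|}=1$ for all $x$, and the resulting presentation is precisely the standard presentation of $\pi_1(\overline{\calg})$. Therefore $\pi_1(\calg)/N\cong\pi_1(\overline{\calg})$, proving the lemma.

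The creative step is the guess that a central subgroup can be found among the powers of a single vertex generator, with $BS(n,n)$ as the model. After that, the heart of the argument — and the one step I expect to require real care — is the displayed computation in the second paragraph, where centrality of $z_{v_0}^M$ against each stable letter turns out to be, edge by edge, equivalent to the balancedness of the associated loop; getting the bookkeeping of the forward and backward products right is the delicate point. The verification that $\overline{\calg}$ is genuinely a graph of groups (in particular that its edge maps remain injective) is routine, and is where the slack built into the choice of $M$ is used.
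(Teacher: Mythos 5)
Your proof is correct and takes essentially the same route as the paper: the central subgroup is generated by a power of a single vertex generator (the paper simply takes the exponent $n=\prod_e n_e$ where you choose a more tailored $M$), the identity placing that element in every vertex group is the same telescoping computation along tree geodesics, and centrality against each stable letter $t_e$ is reduced to balancedness of exactly the same closed loop (tree geodesic, the edge $e$, tree geodesic back). The only divergence is the final identification of the quotient: the paper observes that $N$ acts trivially on the Bass--Serre tree so the action of $\pi_1(\calg)/N$ descends with the same quotient graph and finite cyclic stabilizers, while you match the presentation of $\pi_1(\calg)/N$ with that of an explicitly constructed quotient graph of groups $\overline{\calg}$; both steps are routine and yours is carried out correctly (the compatibility of $N$ with both edge inclusions for non-tree edges is exactly the equation you already derived from balancedness).
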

\begin{proof}
 For a vertex $v$ pick  a generator $z_v$ of $G_v\leq \pi_1(\calg)$ and let  $n:=\prod_{e\in E(\calg)} n_e$. Let $N:=\langle z^n_v\rangle\leq \pi_1(\calg)$ for some vertex $v$. Let us first show that $z_v^n$ is contained in every vertex group. Let $v'$ be any vertex and let $e_1,\ldots,e_m$ be a path in $T$ from $v$ to $v'$. Now let us use the relations corresponding to those edges.  
 Note that from  \cite{serre1980trees} we have $z_v^n= z_{e_1}^{\frac{n}{n_{\overline{e_1}}}}= z_{t(e_1)}^{\frac{n \cdot n_{e_1}}{n_{\overline{e_1}}}} $. 
Now let $\overline{n_{i}}:=\frac{n\cdot n_{e_1}\cdot n_{e_2}\cdot\ldots \cdot n_{e_i}}{n_{\overline{e_1}}\cdot n_{\overline{e_2}}\cdot \ldots \cdot n_{\overline{e_i}}} $, by the definition of $n$ we have that  $\overline{n_{i}}$ is a non-zero integer and:
$$z^n_v = z^{\overline{n_{1}}}_{t(e_1)} =z^{\overline{n_{2}}}_{t(e_2)}=\ldots = z^{\overline{n_{m}}}_{v'}.$$
Thus $z_v^n$ is a power of $z_{v'}$ and so both elements commute.

Next we have to show that $z_v^n$ commutes with all elements of the form $t_e$ for some $e\in L$. Let $e_1,\ldots, e_m$ be a path in $T$ from $v$ to $t(e)$ and let $e'_1,\ldots, e'_{m'}$ be a path in $T$ from $v$ to $t(\overline{e})$. Let us abbreviate  
\[\overline{n_i}:=\frac{n\cdot n_{e_1}n_{e_2}\ldots n_{e_i}}{n_{\overline{e_1}}n_{\overline{e_2}}\ldots n_{\overline{e_i}}} \mbox{ and }
\overline{n'_i}:=\frac{n\cdot n_{e'_1}n_{e'_2}\ldots n_{e'_i}}{n_{\overline{e'_1}}n_{\overline{e'_2}}\ldots n_{\overline{e'_i}}}\]
as before. The path $e'_1,\ldots,e'_{m'},e,\overline{e_{m}},\ldots,\overline{e_1}$ is closed and the balancedness condition gives:
\[\left(\prod_{i=1}^{m'}n_{e'_i}\right)\cdot n_e\cdot \left(\prod_{i=1}^{m}n_{\overline{e_i}}\right)=\left(\prod_{i=1}^{m'}n_{\overline{e_{i}'}}\right)\cdot n_{\overline{e}}\cdot\left(\prod_{i=1}^{m}n_{e_i}\right).\]
With the abbreviations from above this gives
\begin{equation}n_e\overline{n'_{m'}}=n_{\overline{e}}\overline{n_m}\label{eq:balcond}\end{equation}
Analogously to the previous case we get 
\begin{equation}z_{t(e)}^{\overline{n_m}}=z_v^n=z_{t(\overline{e})}^{\overline{n'_{m'}}}.\label{eq:patheq}\end{equation}
Let us conjugate the first part by $t_e$ and use the relation
\begin{equation}t_e(z_{t(e)}^{n_e})t_e^{-1} = z_{t(\overline{e})}^{n_{\overline{e}}}\label{eq:fundrel}\end{equation}
which corresponds to $e$ to get:
\begin{eqnarray*}
t_ez_v^nt_e^{-1}&=&t_ez_{t(e)}^{\overline{n_m}}t_e^{-1}
=\left(t_ez_{t(e)}^{n_e}t_e^{-1}\right)^\frac{\overline{n_m}}{n_e}
=\left(z_{t(\overline{e})}^{n_{\overline{e}}}\right)^\frac{\overline{n_m}}{n_e}
=z_{t(\overline{e})}^{\overline{n'_{m'}}}
=z_v^n. \end{eqnarray*}
In the first equality we used the left hand side of \eqref{eq:patheq}, in the third equality \eqref{eq:fundrel}, the fourth equality uses \eqref{eq:balcond} and the last equality uses the right-hand side of \eqref{eq:patheq}.\\
Thus $z_v^n$ commutes also with all generators of the form $t_e$ and hence $N$ is a central subgroup of $\pi_1(\calg)$. Furthermore the argument from above showed that $N$ is contained in any edge group.

Now let us pass to the actions on trees. Let $T'$ be the universal cover of $\calg$. Let us show that $N$ acts trivially. Let $e\in T'$ be some edge. Its stabilizer is some conjugate of some edge group $G_e\subset \pi_1(\calg)$, say $gG_eg^{-1}$ for some $g\in \pi_1(\calg)$. Since $N$ is a central subgroup of $\pi_1(\calg)$ that is contained in each edge group $G_e\subset \pi_1(\calg)$ we have $N=gNg^{-1}\subset gG_eg^{-1}$. So the action factorizes as $\pi_1(\calg)\rightarrow \pi_1(\calg)/N\rightarrow \Aut(T')$. Especially note that the underlying graphs of $\pi_1(\calg)\backslash T'$ and $\left(\pi_1(\calg)/N\right)\backslash T'$ are the same. Thus $\pi_1(\calg)/N$ can also be expressed as a fundamental group of a finite graph of groups. The vertex and edge groups are $G_x/N$. These are quotients of infinite cyclic groups by a nontrivial subgroup. Thus they are finite.
\end{proof}

\begin{lemma}\label{lem:FJbal} Let $\calg$ be a balanced graph of groups. Then $\pi_1(\calg)$ satisfies the $\mathbf{FJC}$.
\end{lemma}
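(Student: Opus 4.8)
The plan is to reduce to a finite graph of groups and then present $\pi_1(\calg)$ as a free product of free groups and of fundamental groups of finite balanced graphs of groups all of whose vertex \emph{and} edge groups are infinite cyclic --- exactly the situation covered by Lemma~\ref{lem:balmapstohyp}. For the reduction: by Definition~\ref{bala} a balanced graph of groups is countable with all vertex groups, hence all edge groups, trivial or infinite cyclic. Writing $\calg$ as the directed union of its finite connected sub-graphs of groups containing a fixed base vertex, each inclusion is $\pi_1$-injective by Bass--Serre theory, so $\pi_1(\calg)=\colim\pi_1(\calg')$ over this directed system, each $\calg'$ is again finite and balanced, and Proposition~\ref{lem:FJinh}\eqref{lem:FJinh:colim} reduces us to a finite balanced $\calg$.

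So let $\calg$ be finite and balanced. An infinite cyclic edge group injects into each of its vertex groups, so every edge with infinite cyclic edge group has both endpoints carrying infinite cyclic vertex groups. Let $Y\subseteq\calg$ be the subgraph spanned by the vertices with infinite cyclic group together with the edges with infinite cyclic group, and let $\calg_1,\dots,\calg_r$ be the connected components of $Y$, viewed as sub-graphs of groups of $\calg$. Each $\calg_i$ is a finite balanced graph of groups all of whose vertex and edge groups are infinite cyclic --- balancedness is inherited since a closed edge loop of $\calg_i$ is one of $\calg$. Picking a spanning tree $T$ of $\calg$ that restricts to a spanning tree of each $\calg_i$, the standard presentation of $\pi_1(\calg)$ only couples vertex generators and stable letters lying in a common $\calg_i$, while every other edge carries the trivial group and contributes only a free generator; hence
\[\pi_1(\calg)\;\cong\;\pi_1(\calg_1)*\cdots*\pi_1(\calg_r)*F\]
for some finitely generated free group $F$. (Equivalently, one may collapse each $\calg_i$ to a single vertex with vertex group $\pi_1(\calg_i)$, leaving a graph of groups with only trivial edge groups, whose fundamental group is the displayed free product.)

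To conclude: $F$ is word-hyperbolic, so $F\in\calc$ by Proposition~\ref{lem:FJinh}\eqref{lem:FJinh:hyp}. For each $i$, Lemma~\ref{lem:balmapstohyp} gives a central subgroup $N_i\cong\IZ\leq\pi_1(\calg_i)$ with $Q_i:=\pi_1(\calg_i)/N_i$ the fundamental group of a finite graph of finite groups, hence virtually free, hence word-hyperbolic, hence $Q_i\in\calc$. Applying Proposition~\ref{lem:FJinh}\eqref{lem:FJinh:hom} to $f\colon\pi_1(\calg_i)\twoheadrightarrow Q_i$: a torsion-free cyclic subgroup $Z\leq Q_i$ is trivial or infinite cyclic, so $f^{-1}(Z)$ is a central extension of $Z$ by $N_i\cong\IZ$, thus isomorphic to $\IZ$ or $\IZ^2$ and in $\calc$ by Proposition~\ref{lem:FJinh}\eqref{lem:FJinh:poly}; therefore $\pi_1(\calg_i)\in\calc$. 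Iterating Proposition~\ref{lem:FJinh}\eqref{lem:FJinh:fprod} then gives $\pi_1(\calg)\in\calc$. The one delicate point is the decomposition step: one must check that splitting off the ``infinite cyclic part'' $Y$ really does leave only trivial-group edges, so that these contribute free generators rather than secretly amalgamating two of the $\calg_i$ --- this is what the compatible choice of $T$ (or the collapsing reformulation) ensures --- and that each $\calg_i$ still satisfies the hypotheses of Lemma~\ref{lem:balmapstohyp}; the rest is formal from Proposition~\ref{lem:FJinh}.
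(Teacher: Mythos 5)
Your proposal is correct and follows essentially the same route as the paper: reduce to a finite graph via colimits, split off the components carried by infinite cyclic edge groups, handle each such component with Lemma~\ref{lem:balmapstohyp} and Proposition~\ref{lem:FJinh}\eqref{lem:FJinh:hom} (the preimages being $\IZ$ or $\IZ^2$, i.e.\ virtually abelian), and absorb the trivial-edge-group part via free products using Proposition~\ref{lem:FJinh}\eqref{lem:FJinh:fprod}. The only difference is presentational: the paper re-attaches the trivial edges one at a time, whereas you state the full free product decomposition $\pi_1(\calg)\cong\pi_1(\calg_1)*\cdots*\pi_1(\calg_r)*F$ at once.
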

\begin{proof}
For the sake of simplicity we would like to define the fundamental group of a disconnected graph of groups as an assignment that assigns to any connected component the fundamental group of its graph of groups. First note that we can exhaust any graph by its finite subgraphs. Then its fundamental group will be the colimit of the fundamental group of the finite subgraphs of groups. So by Proposition~\ref{lem:FJinh}(\ref{lem:FJinh:colim}) it suffices to consider a finite graph of groups with these properties. 
\begin{figure}[h!]
\centering
\begin{tikzpicture}[scale=0.95] 
\draw (0,0)node {$\bullet$} .. controls (-1,0) and (0,-1) .. (0,0);
\draw (0,0)--(1,0)node {$\bullet$}--(0.5, 0.75) node {$\bullet$}-- (0,0); 
\draw [dotted](1,0)--(0.9,-0.75)node {$\bullet$};  
\draw [dotted](0.5,0.75)--(1.5,1)node {$\bullet$}--(2.25, 1.5);  
\draw (2.25,1.5)node {$\bullet$}--(2.75, 2.25)node {$\bullet$}--(3.25,1.5)node {$\bullet$};
\draw[dotted](2.25,1.5)--(3.25,1.5);
\draw [dotted](1.5,1) .. controls (2,0.625) and (2,0.625) .. (2,0);
\draw [dotted](1.5,1) .. controls (1.5,0.375) and (1.5,0.375) .. (2,0);
\draw (2,0)node {$\bullet$};
\draw (2,0) .. controls (1,0) and (2,-1) .. (2,0);
\draw[dotted](2, 0)--(2.75,-0.75);
\draw (2.75,-0.75)node {$\bullet$} .. controls (3.75,-0.75) and (2.75,-1.75) .. (2.75,-0.75);
\end{tikzpicture}
\caption{The dotted lines correspond to edges with trivial stabilizers and the lines correspond to edges with infinite cyclic stabilizers.}
\label{figure:fig}
\end{figure}
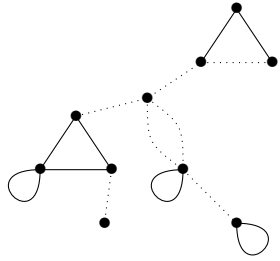
Next consider the subgraph consisting of all edges with nontrivial stabilizers and all vertices as illustrated in Figure \ref{figure:fig}. A connected component  of a graph is a maximal, connected subgraph.
 Let us show that the fundamental group of each of its connected components satisfies the $\mathbf{FJC}$. Note that an edge with nontrivial stabilizer can only connect vertices with nontrivial stabilizer. So the connected component of vertices with trivial stabilizers consists only of points and hence their fundamental group is trivial.\\
Let us now consider a connected component $\calc$ of a vertex $v$ with nontrivial stabilizer. By Lemma~\ref{lem:balmapstohyp} we find an infinite cyclic subgroup $N\leq \pi_1(\calc)$ such that $\pi_1(\calc)/N$ is fundamental group of a finite graph of finite groups and hence virtually-free. 
So $\pi_1(\calc)/N$ satisfies $\mathbf{FJC}$ by Proposition~\ref{lem:FJinh}(\ref{lem:FJinh:hyp}). Let us apply Proposition~\ref{lem:FJinh}(\ref{lem:FJinh:hom}) to the map $f:\pi_1(\calc)\rightarrow \pi_1(\calc)/N$. Since the kernel of $f$ is infinite cyclic any preimage $f^{-1}(Z)$ of an infinite cyclic subgroup $Z$ of $\pi_1(\calc)/N$ will satisfy the $\mathbf{FJC}$ by being virtually abelian.\\
Next we have to add edges to get the full graph. If we add an edge we either merge two connected components or add an edge to a single connect component. In the first case the fundamental group of the new component is the free product of the fundamental groups of the previous components. In the second case the new fundamental group is a free product of $\IZ$ with the old fundamental group. In both cases the fundamental group of the new component satisfies the $\mathbf{FJC}$ by Proposition~\ref{lem:FJinh}(\ref{lem:FJinh:fprod}). This completes the proof.
\end{proof}

The following criterion implies balancedness.

\begin{lemma}\label{lem:critBal} Let $\calg$ be a graph of groups where every vertex group is either trivial or infinite cyclic. Assume that for each edge $e$ with infinite cyclic stabilizer there exists a homomorphism $f_e:\pi_1(\calg)\rightarrow \IZ$ such that $f_{e}|_{G_e}$ is nontrivial. Then $\calg$ is balanced.
\end{lemma}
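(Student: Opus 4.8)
The plan is to show directly that every closed edge loop with infinite cyclic edge stabilizers satisfies the balancedness equation of Definition~\ref{bala}, by evaluating a suitable homomorphism to $\IZ$ on an element that witnesses the loop. Fix a closed edge loop $e_1,\ldots,e_m$ with $t(\overline{e_i})=t(e_{i+1})$ (indices mod $m$) and all edge groups infinite cyclic. For each $i$ pick the generator $z_{e_i}$ of $G_{e_i}$ and write $i_{e_i}(z_{e_i})=z_{t(e_i)}^{n_{e_i}}$ and $i_{\overline{e_i}}(z_{e_i})=z_{t(\overline{e_i})}^{n_{\overline{e_i}}}$ as in Definition~\ref{bala}; note in particular that each vertex appearing on the loop has nontrivial (hence infinite cyclic) stabilizer, so these generators $z_{t(e_i)}$ exist.

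First I would record how the generators of the edge groups along the loop are related inside $\pi_1(\calg)$. Using the Bass--Serre relations, for $e\in T$ we have $i_e(g)=i_{\overline e}(g)$, and for $e\notin T$ we have $t_e i_e(g)t_e^{-1}=i_{\overline e}(g)$; in either case there is an element $h_i\in\pi_1(\calg)$ (equal to $1$ or to $t_{e_i}^{\pm1}$) with $h_i z_{t(e_i)}^{n_{e_i}} h_i^{-1}=z_{t(\overline{e_i})}^{n_{\overline{e_i}}}$. Now apply the hypothesis: pick any one edge of the loop, say $e_1$, with its homomorphism $f:=f_{e_1}:\pi_1(\calg)\to\IZ$ that is nontrivial on $G_{e_1}$. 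Set $a_v:=f(z_v)\in\IZ$ for each vertex $v$ on the loop. Applying $f$ to the relation $h_i z_{t(e_i)}^{n_{e_i}} h_i^{-1}=z_{t(\overline{e_i})}^{n_{\overline{e_i}}}$ and using that $f$ is a homomorphism to the abelian group $\IZ$ kills the conjugating element $h_i$, yielding $n_{e_i}\,a_{t(e_i)}=n_{\overline{e_i}}\,a_{t(\overline{e_i})}$ for every $i$.

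Multiplying these $m$ identities together and using $t(\overline{e_i})=t(e_{i+1})$ cyclically, the product $\prod_i a_{t(e_i)}$ appears on both sides. The key point, and the only real subtlety, is that we may cancel it: since $f|_{G_{e_1}}$ is nontrivial, $f(i_{e_1}(z_{e_1}))=n_{e_1}a_{t(e_1)}\neq 0$, so in particular $a_{t(e_1)}\neq0$; and from $n_{e_i}a_{t(e_i)}=n_{\overline{e_i}}a_{t(e_{i+1})}$ together with $a_{t(e_1)}\neq0$ and all $n_{e_i},n_{\overline{e_i}}\neq 0$ (edge groups inject into vertex groups), an easy induction around the loop gives $a_{t(e_i)}\neq0$ for all $i$. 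Hence $\prod_i a_{t(e_i)}\neq0$, and cancelling it from both sides of the product of the identities leaves exactly $\prod_{i=1}^m n_{e_i}=\prod_{i=1}^m n_{\overline{e_i}}$, which is the balancedness condition. Since the loop was arbitrary, $\calg$ is balanced. The main obstacle is the bookkeeping in the previous two steps — correctly matching the orientations $t(\overline{e_i})=t(e_{i+1})$ around the loop and justifying the nonvanishing needed for cancellation — but once the relation $n_{e_i}a_{t(e_i)}=n_{\overline{e_i}}a_{t(e_{i+1})}$ is in hand the conclusion is immediate.
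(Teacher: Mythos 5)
Your proposal is correct and follows essentially the same route as the paper: apply a single one of the given homomorphisms $f_e$ to the edge relations of the presentation of $\pi_1(\calg)$, use that $\IZ$ is abelian to kill the conjugating elements $t_{e'}$, and combine the resulting identities $n_{e'}f_e(z_{t(e')})=n_{\overline{e'}}f_e(z_{t(\overline{e'})})$ around the loop with the nontriviality hypothesis to force $\prod_i n_{e_i}=\prod_i n_{\overline{e_i}}$. The only cosmetic difference is that the paper telescopes these identities along the loop (needing nonvanishing of $f_{e_m}(z_{t(e_m)})$ at a single vertex), whereas you multiply them all and cancel, justifying the cancellation by the short induction showing $f(z_{t(e_i)})\neq 0$ at every vertex of the loop; both versions are sound.
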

\begin{proof}
Let us again use the notation of Definition~\ref{bala}. Recall that an egde $e'$ gives a relation in the fundamental group. If the edge is contained in the chosen maximal tree, the relation will be $z_{t(\overline{e'})}^{n_{\overline{e'}}}=z_{e'}=z_{t(e')}^{n_{e'}}$. Otherwise it will be $t_{e'}^{-1}z_{t(\overline{e'})}^{n_{\overline{e'}}}t_{e'}=z_{e'}=z_{t(e')}^{n_{e'}}$.
Since the target of $f_e$ is abelian we get in both cases $f_e(z_{t(\overline{e'})})^{n_{\overline{e'}}}= f_e(z_{t(e')})^{n_{e'}}$.\\
Let $e_1,\ldots, e_m$ be a loop of edges with infinite stabilizers, and let 
\[n:= \prod_{i=1}^m n_{e_i},\qquad \overline{n} :=\prod_{i=1}^m n_{\overline{e_i}}.\]
So we get: 
\begin{multline*}f_{e_m}(z_{t(e_m)})^{\overline{n}}=f_{e_m}(z_{t(\overline{e_1})})^{\overline{n}}=f_{e_m}(z_{t(e_1)})^\frac{\overline{n}n_{e_1}}{\overline{n_{e_1}}}= \\
f_{e_m}(z_{t(\overline{e_2})})^\frac{\overline{n}n_{e_1}}{\overline{n_{e_1}}}=f_{e_m}(z_{t(e_2)})^{\frac{\overline{n}n_{e_1}n_{e_2}}{\overline{n_{e_1}}\overline{n_{e_2}}}}=\ldots =f_{e_m}(z_{t(e_m)})^n.\end{multline*}

Since $0\neq f_{e_m}(z_{e_m})=f_{e_m}(z_{t(e_m)})^{n_{e_m}}$ this implies that $n=\overline{n}$.
So the circle $e_1,\ldots,e_m$ is balanced and since the circle was chosen arbitrarily, the graph of groups is balanced.
\end{proof} 

\begin{lemma}\label{lem:AutFJProd} Given two groups $G_1,G_2$ with corresponding automorphisms $\varphi_1,\varphi_2$. Suppose that the $\mathbf{FJC}$ holds for $G_1\rtimes_{\varphi_1} \IZ$ and for $G_2\rtimes_{\varphi_2} \IZ$. Then it also holds for $(G_1\times G_2)\rtimes_{\varphi_1 \times\varphi_2}\IZ$.
\end{lemma}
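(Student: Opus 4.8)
The plan is to reduce the statement to an application of Proposition~\ref{lem:FJinh}(\ref{lem:FJinh:hom}) via a suitable homomorphism out of $(G_1\times G_2)\rtimes_{\varphi_1\times\varphi_2}\IZ$. The natural target is the group $G:=(G_1\rtimes_{\varphi_1}\IZ)\times(G_2\rtimes_{\varphi_2}\IZ)$, which satisfies the $\mathbf{FJC}$ by hypothesis together with Proposition~\ref{lem:FJinh}(\ref{lem:FJinh:prod}). There is an obvious homomorphism $f:(G_1\times G_2)\rtimes_{\varphi_1\times\varphi_2}\IZ\to G$ sending $(g_1,g_2)$ to $(g_1,g_2)$ and the generator $t$ of the acting $\IZ$ to $(t,t)$; one checks this respects the relations since conjugation by $(t,t)$ acts as $\varphi_1\times\varphi_2$. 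The kernel of $f$ is the ``antidiagonal'' copy of $\IZ$ inside the $\IZ\times\IZ=(G_1\rtimes\IZ)\times(G_2\rtimes\IZ)$-quotient, i.e.\ generated by the element $t$ in the first factor paired with $t^{-1}$ in the second (more precisely, once we also project away the $G_i$'s). So $\ker f$ is infinite cyclic and central? Not central in general — it is central in $G_1\times G_2$ direction? Here I need to be careful.

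First I would pin down $\ker f$ exactly. Map $(G_1\times G_2)\rtimes_{\varphi_1\times\varphi_2}\IZ$ to $\IZ\times\IZ$ by killing $G_1$ and $G_2$; the composite with $f$ becomes the diagonal $\IZ\hookrightarrow\IZ\times\IZ$, so $\ker f$ maps trivially, hence $\ker f\subseteq G_1\times G_2$. Within $G_1\times G_2$ the map $f$ is the identity, so $\ker f$ is actually trivial — wait, that shows $f$ is injective, which is false. The resolution: $f$ restricted to the $\IZ$ of the semidirect product is $t\mapsto(t,t)$, injective, and $f$ on $G_1\times G_2$ is the identity, and $G=(G_1\times G_2)\rtimes(\IZ\times\IZ)/(\text{nothing})$ — actually $G$ itself is $(G_1\times G_2)\rtimes(\IZ\times\IZ)$ with the two $\IZ$'s acting on the respective factors, so $f$ is injective and realizes our group as the subgroup where the two $\IZ$-coordinates are equal. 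Then the $\mathbf{FJC}$ for $(G_1\times G_2)\rtimes_{\varphi_1\times\varphi_2}\IZ$ follows immediately from Proposition~\ref{lem:FJinh}(\ref{lem:FJinh:sub}), since it is a subgroup of $G$ which satisfies $\mathbf{FJC}$ by (\ref{lem:FJinh:prod}) and the hypothesis.

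So the actual proof is short: exhibit the embedding $(G_1\times G_2)\rtimes_{\varphi_1\times\varphi_2}\IZ\hookrightarrow (G_1\rtimes_{\varphi_1}\IZ)\times(G_2\rtimes_{\varphi_2}\IZ)$, $(g_1,g_2,t^k)\mapsto((g_1,t^k),(g_2,t^k))$, verify it is a well-defined injective homomorphism (the only check is compatibility with the semidirect-product relation, which holds because $(t^k,t^k)$ conjugates $(g_1,g_2)$ to $(\varphi_1^k(g_1),\varphi_2^k(g_2))$), then invoke closure of $\calc$ under direct products and under subgroups. The main (and only real) point to get right is the bookkeeping that this map is genuinely injective and a homomorphism; there is no serious obstacle, and in particular no need for the graph-of-groups machinery developed earlier in this section. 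I would write it as a two-line proof once the embedding is displayed.
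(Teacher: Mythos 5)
Your final argument is correct, but it is genuinely different from the paper's proof. You embed $(G_1\times G_2)\rtimes_{\varphi_1\times\varphi_2}\IZ$ into $(G_1\rtimes_{\varphi_1}\IZ)\times(G_2\rtimes_{\varphi_2}\IZ)$ via $((g_1,g_2),t^k)\mapsto((g_1,t^k),(g_2,t^k))$ — which is indeed a well-defined injective homomorphism, as the only relation to check is that $(t^k,t^k)$ conjugates $(g_1,g_2)$ to $(\varphi_1^k(g_1),\varphi_2^k(g_2))$ — and then you only need Proposition~\ref{lem:FJinh}(\ref{lem:FJinh:prod}) and (\ref{lem:FJinh:sub}). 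The paper instead applies the homomorphism criterion, Proposition~\ref{lem:FJinh}(\ref{lem:FJinh:hom}), to the projection $f:(G_1\times G_2)\rtimes_{\varphi_1\times\varphi_2}\IZ\to G_1\rtimes_{\varphi_1}\IZ$, identifying the preimage of a cyclic subgroup generated by $(g,m)$ with $G_2\rtimes_{\varphi_2^m}\IZ$ (a direct product when $m=0$, and a subgroup of $G_2\rtimes_{\varphi_2}\IZ$ otherwise); your route avoids this case analysis entirely and is shorter, while the paper's route rehearses exactly the technique it must use anyway in the free-product analogue (Lemma~\ref{lem:AutFJFrProd}), where no such embedding into a product of the hypothesized groups is available. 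One stylistic caveat: the middle of your write-up wanders (the temporary confusion about whether the map has a kernel or is central); in a final version you should delete that exploration and present only the displayed embedding, the one-line verification that it is a homomorphism and injective, and the two citations of Proposition~\ref{lem:FJinh}.
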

\begin{proof} Projection to the first factor induces a homomorphism
\[f:(G_1\times G_2)\rtimes_{\varphi_1\times \varphi_2}\IZ \rightarrow G_1\rtimes_{\varphi_1} \IZ.\]
We want to use Proposition~\ref{lem:FJinh}(\ref{lem:FJinh:hom}), the target satisfies the Farrell-Jones conjecture with wreath products by assumption. The kernel is isomorphic to $G_2$ and hence it also satisfies  the $\mathbf{FJC}$ by Proposition~\ref{lem:FJinh}(\ref{lem:FJinh:sub}). Let $C$ be an infinite cyclic subgroup of $G_1\rtimes_{\varphi_1} \IZ$ and let $(g,m)$ be a generator. Then $f^{-1}(C)$ is isomorphic to $G_2\rtimes_{\varphi_2^m}\IZ$. For $m=0$ the preimage  $f^{-1}(C)$ is just a direct product of two groups satisfying $\mathbf{FJC}$. Otherwise $f^{-1}(C)$ is isomorphic to the subgroup $G_2\rtimes_{\varphi_2}m\IZ$ and again by Proposition~\ref{lem:FJinh}(\ref{lem:FJinh:sub}) it also satisfies the $\mathbf{FJC}$.
\end{proof}

\begin{lemma}\label{lem:AutFJFrProd} Given two groups $G_1,G_2$ with automorphisms $\varphi_1,\varphi_2$. Suppose that the $\mathbf{FJC}$ holds for $G_1\rtimes_{\varphi_1} \IZ$ and for $G_2\rtimes_{\varphi_2} \IZ$. Then it also holds for $(G_1* G_2)\rtimes_{\varphi_1*\varphi_2}\IZ$.
\end{lemma}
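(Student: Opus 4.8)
\textbf{Proof proposal for Lemma~\ref{lem:AutFJFrProd}.}

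The plan is to realise $(G_1 * G_2) \rtimes_{\varphi_1 * \varphi_2} \IZ$ as the fundamental group of a suitable graph of groups and then feed it into the machinery already established, via Proposition~\ref{lem:FJinh}(\ref{lem:FJinh:hom}). First I would observe that $G := G_1 * G_2$ acts on its Bass--Serre tree $T$ (the tree associated to the free product decomposition), with two vertex orbits having stabilisers conjugate to $G_1$ and $G_2$ and with trivial edge stabilisers. The automorphism $\varphi := \varphi_1 * \varphi_2$ preserves the free factors, hence induces a $\varphi$-equivariant simplicial automorphism of $T$; consequently the semidirect product $G \rtimes_\varphi \IZ$ acts on $T$ extending the $G$-action, and the quotient graph of groups now has vertex groups $G_i \rtimes_{\varphi_i} \IZ$ and an edge group isomorphic to $\IZ$ (the image of the $\IZ$-factor), with the edge maps being the obvious inclusions $\IZ \hookrightarrow G_i \rtimes_{\varphi_i} \IZ$ onto the $\IZ$-summand. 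Thus $G \rtimes_\varphi \IZ$ is the fundamental group of an edge of groups $G_1 \rtimes_{\varphi_1}\IZ \hookleftarrow \IZ \hookrightarrow G_2 \rtimes_{\varphi_2}\IZ$; equivalently it is the pushout (amalgamated product) $(G_1 \rtimes_{\varphi_1}\IZ) *_\IZ (G_2 \rtimes_{\varphi_2}\IZ)$ along these $\IZ$-subgroups.

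Next I would set up the transfer homomorphism. Let $f : G \rtimes_\varphi \IZ \to \IZ$ be the projection onto the $\IZ$-factor (which is well-defined since $\varphi$ is an automorphism, so this is a genuine quotient), and consider the composite with the identity target; actually the cleaner choice is the homomorphism $f : (G_1 * G_2)\rtimes \IZ \to \IZ$ sending $G_1 * G_2$ to $0$ and the generator of $\IZ$ to $1$. Its kernel is $G_1 * G_2$, which satisfies $\mathbf{FJC}$ by hypothesis (each $G_i$ is a subgroup of $G_i \rtimes_{\varphi_i}\IZ \in \calc$, so $G_i \in \calc$ by Proposition~\ref{lem:FJinh}(\ref{lem:FJinh:sub}), and then $G_1 * G_2 \in \calc$ by Proposition~\ref{lem:FJinh}(\ref{lem:FJinh:fprod})). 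However, $\calc$ is not known to be closed under extensions by $\IZ$, so mapping to $\IZ$ alone is not enough. Instead I would apply Proposition~\ref{lem:FJinh}(\ref{lem:FJinh:hom}) to one of the vertex-group projections. More precisely, pick the retraction-type map obtained as follows: the graph-of-groups description gives a map $G \rtimes_\varphi \IZ \to (G_1 \rtimes_{\varphi_1}\IZ)$ only if $G_2$ retracts, which it does — send $G_2$ to $1$, keep $G_1$, and send the generator of $\IZ$ to the generator of the target $\IZ$. Call this $p : (G_1 * G_2)\rtimes_{\varphi_1 * \varphi_2}\IZ \to G_1 \rtimes_{\varphi_1}\IZ$; it is well-defined because collapsing $G_2$ is compatible with $\varphi_1 * \varphi_2$. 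The target lies in $\calc$ by hypothesis.

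Now I would compute preimages of torsion-free cyclic subgroups of $G_1 \rtimes_{\varphi_1}\IZ$, which is the crux. Let $C \le G_1 \rtimes_{\varphi_1}\IZ$ be infinite cyclic with generator $(g, m)$. If $m = 0$, then $C \le G_1$, and $p^{-1}(C)$ is the kernel of $G \rtimes_\varphi \IZ \to (G_1/\!\!/C) \rtimes \IZ$... more directly, $p^{-1}(C)$ contains $G_2$ as a normal subgroup with quotient $C \times \IZ \cong \IZ^2$, and in fact $p^{-1}(C) = (C * G_2) \rtimes_{?}\IZ$; here one observes $p^{-1}(C)$ acts on the Bass--Serre tree $T$ with vertex stabilisers conjugates of $C \rtimes \IZ \cong \IZ^2$ (virtually, $C$ is central-ish... careful: $\varphi_1$ need not fix $g$) or of $G_2 \rtimes_{\varphi_2}\IZ$ or of $G_2$, and trivial edge stabilisers. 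The key point is that this is again a graph of groups all of whose vertex groups are in $\calc$ (each is either a subgroup of $G_2 \rtimes_{\varphi_2}\IZ$, or isomorphic to $C \rtimes_{\varphi_1^k}\IZ$ for some $k$, which is virtually $\IZ^2$ hence in $\calc$ by Proposition~\ref{lem:FJinh}(\ref{lem:FJinh:poly})), with trivial edge groups, so by iterating Proposition~\ref{lem:FJinh}(\ref{lem:FJinh:fprod})/(\ref{lem:FJinh:colim}) — exactly the ``add edges one at a time'' argument from the proof of Lemma~\ref{lem:FJbal} — the fundamental group $p^{-1}(C)$ satisfies $\mathbf{FJC}$. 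If instead $m \ne 0$, then $p^{-1}(C)$ is commensurable with $G \rtimes_{\varphi^m}\IZ = (G_1 * G_2) \rtimes_{\varphi_1^m * \varphi_2^m}\IZ$ — indeed equal to the finite-index subgroup $(G_1 * G_2)\rtimes m\IZ$ — which by Lemma~\ref{lem:AutFJFrProd} applied to the automorphisms $\varphi_1^m, \varphi_2^m$ would give a circularity; to avoid it one argues instead that $p^{-1}(C)$, having the subgroup $G_2$ as a free factor of its kernel-over-$C$, decomposes as a graph of groups with vertex groups $G_1 \rtimes_{\varphi_1}\IZ$ (subgroups thereof) and trivial or $\IZ^2$ edge/vertex groups, so once more the graph-of-groups bookkeeping of Lemma~\ref{lem:FJbal} applies.

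\textbf{Main obstacle.} The genuine difficulty is the case $m \ne 0$: here $p^{-1}(C)$ is not simply a free product of manageable pieces, because the $\IZ$ it contains twists both factors simultaneously, and one must be careful to extract a graph-of-groups decomposition of $p^{-1}(C)$ whose vertex groups are already known to be in $\calc$ without invoking the lemma being proved. I expect the clean fix is to apply Proposition~\ref{lem:FJinh}(\ref{lem:FJinh:hom}) a second time, now to the map $p^{-1}(C) \to C \cong \IZ$ induced by $p$ followed by projection $C \rtimes \IZ \to C$: its kernel is $G_2 \rtimes_{\varphi_2^k}\IZ$-type (in fact a subgroup of $G_2 \rtimes_{\varphi_2}\IZ$, which is in $\calc$), and preimages of cyclic subgroups of $\IZ$ are finite-index in $p^{-1}(C)$ itself — wait, that again loops. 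The correct resolution, which I would carry out in detail, is to realise $p^{-1}(C)$ directly as $\pi_1$ of a graph of groups via the $T$-action, check all vertex groups are virtually-$(G_2\rtimes_{\varphi_2}\IZ)$ or virtually-$\IZ^2$ and all edge groups trivial or $\IZ$ mapping nontrivially (so Lemma~\ref{lem:critBal} applies, giving balancedness), and then invoke Lemma~\ref{lem:FJbal}-style reasoning combined with Proposition~\ref{lem:FJinh}(\ref{lem:FJinh:hom}) on the vertex groups — this is the step that will need the most care and is where all the hypotheses are genuinely used.
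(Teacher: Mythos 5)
Your choice of test map is where the argument breaks. The paper applies Proposition~\ref{lem:FJinh}(\ref{lem:FJinh:hom}) not to a retraction onto one vertex group but to the canonical map $f:(G_1*G_2)\rtimes_{\varphi_1*\varphi_2}\IZ\rightarrow (G_1\times G_2)\rtimes_{\varphi_1\times\varphi_2}\IZ$, whose target is in $\calc$ by Lemma~\ref{lem:AutFJProd}. The point of that choice is that $\ker(G_1*G_2\to G_1\times G_2)$ acts \emph{freely} on the Bass--Serre tree $T$, so for any infinite cyclic $C$ in the target every vertex and edge stabilizer of the $f^{-1}(C)$-action injects into $C$; hence $f^{-1}(C)$ is the fundamental group of a graph of groups all of whose vertex and edge groups are trivial or infinite cyclic, which is exactly the setting of Lemma~\ref{lem:critBal} (balancedness, via the projection to the $\IZ$-factor restricted to edge groups, which are subconjugate to $(1,\IZ)$) and Lemma~\ref{lem:FJbal}. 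With your map $p:(G_1*G_2)\rtimes\IZ\to G_1\rtimes_{\varphi_1}\IZ$ that collapses $G_2$, the kernel is the normal closure of $G_2$, a free product of conjugates of $G_2$, which does \emph{not} act freely on $T$; consequently, for a generator $(g,m)$ with $m\neq 0$ the preimage $p^{-1}(C)$ is of the form $\left(\bigast_{h\in G_1}hG_2h^{-1}\right)\rtimes\IZ$ with $\IZ$ permuting the factors and twisting by $\varphi_2^m$ -- a group of essentially the same type as the one you are trying to prove the lemma for (it is what Lemma~\ref{lem:PermFactorsIsOK} handles later, and that lemma is proved \emph{using} the present one), which is the circularity you yourself notice.

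Your proposed repair does not close this gap: Lemma~\ref{lem:critBal} and Lemma~\ref{lem:FJbal} apply only to graphs of groups whose vertex groups are trivial or infinite cyclic, whereas your decomposition of $p^{-1}(C)$ has vertex groups containing conjugates of $G_2$ (or subgroups of $G_2\rtimes_{\varphi_2}\IZ$). There is no ``Lemma~\ref{lem:FJbal}-style'' statement in the paper for graphs of groups with arbitrary vertex groups in $\calc$ and $\IZ$ edge groups, and such a statement is not harmless (Baumslag--Solitar-type gluings show the edge-inclusion data genuinely matters), so invoking it is a missing idea rather than a routine verification. The fix is the paper's: map to the direct product so that all stabilizers of the restricted tree action become trivial or infinite cyclic, and only then run the balanced graph-of-groups machinery. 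Your first paragraph (the segment of groups $G_1\rtimes_{\varphi_1}\IZ\hookleftarrow\IZ\hookrightarrow G_2\rtimes_{\varphi_2}\IZ$ and the extension of the $G_1*G_2$-action on $T$ to the semidirect product) does agree with the paper and is fine.
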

\begin{proof} We want to apply Proposition~\ref{lem:FJinh}(\ref{lem:FJinh:hom}) to the canonical homomorphism
\[f:(G_1* G_2)\rtimes_{\varphi_1*\varphi_2}\IZ \rightarrow (G_1\times G_2)\rtimes_{\varphi_1\times \varphi_2}\IZ.\]
The target satisfies the $\mathbf{FJC}$  by Lemma~\ref{lem:AutFJProd}. 
The group $G_1 * G_2$ acts on a tree $T$ with trivial edge stabilizers and vertex stabilizers conjugate to $G_1$ or $G_2$. We can identify the set of edges equivariantly with $G_1*G_2$ where the action is given by left multiplication. The set of vertices can be identified with $G_1*G_2/G_1\amalg G_1*G_2/G_2$  and the two endpoints of an edge are its cosets. We can extend the action  to the group $(G_1* G_2)\rtimes_{\varphi_1*\varphi_2}\IZ$ on the tree $T$ if we let the generator $t$ of $\IZ$ act in the following way:
\begin{eqnarray*} 
t\cdot g &:=& \varphi_1*\varphi_2(g) \mbox{ for an edge }g,\\
\quad t\cdot gG_i&:=&\varphi_1*\varphi_2(gG_i)=\varphi_1*\varphi_2(g)G_i\mbox{ for a vertex }gG_i.
\end{eqnarray*}

The quotient of the tree $T$ by the group $(G_1*G_2)\rtimes_{\varphi_1*\varphi_2} \IZ$ will be  a segment since the quotient of $T$ by $G_1 * G_2$ is a segment and $t$ does not swap the vertices. It is easy to see that this  gives the following graph of groups decomposition for $(G_1*G_2)\rtimes_{\varphi_1*\varphi_2} \IZ$: 
\begin{align*}
\xymatrixrowsep{0.03in}
\xymatrix{
\bullet\ar@{-}[r]^\IZ & \bullet \\
G_1\rtimes \IZ & G_2\rtimes\IZ}
\end{align*}

Now let $C$ be an infinite cyclic subgroup of $(G_1\times G_2)\rtimes_{\varphi_1\times \varphi_2}\IZ$.  The kernel of $f$ acts freely on $T$. So no nontrivial element from the kernel can stabilize some edge or some vertex. Restrict the  group action to $f^{-1}(C)$, then $f$ maps injectively  every  stabilizer into $C$.
So the group $f^{-1}(C)$ can be written as the fundamental group of a graph of groups $\calg$ where every vertex group and every edge group is either infinite cyclic or trivial. \\
Any infinite edge group is subconjugate to $(1, \IZ) \leq (G_1* G_2)\rtimes_{\varphi_1*\varphi_2}\IZ$. So if we restrict the projection $\pi :\IZ \leq (G_1* G_2)\rtimes_{\varphi_1*\varphi_2}\IZ\rightarrow \IZ$ to this edge group we get a nontrivial homomorphism. So $\calg$ is balanced by Lemma~\ref{lem:critBal}. Hence $f^{-1}(Z)=\pi_1(\calg)$ satisfies the Farrell-Jones conjecture by Lemma~\ref{lem:FJbal}. This completes the proof.
\end{proof}
 Let $S$ be a $\IZ$-set and let $G$ be a group. We define $G_S:=\left(\bigast_{s\in S} G\right)\rtimes \IZ$ where $\IZ$ acts on  $\bigast_{s\in S} G$ by 
$m\cdot g_s:=g_{ms}$. Here $g_s$ denotes the group element $g$ lying in the $s$-th copy of $G$. 
\begin{lemma}\label{lem:PermFactorsIsOK} For any $\IZ$-set $S$  the group  $G_S$ satisfies the $\mathbf{FJC}$ whenever $G$ does.
\end{lemma}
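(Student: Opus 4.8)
The plan is to use the closure properties of $\calc$ collected in Proposition~\ref{lem:FJinh}, together with Lemma~\ref{lem:AutFJFrProd}, to reduce the statement to the case in which $S$ is a single $\IZ$-orbit. First I would decompose $S=\coprod_{i\in I}S_i$ into its $\IZ$-orbits; each $S_i$ is either finite, in which case $S_i\cong\IZ/n_i$ with the cyclic shift, or infinite, in which case $S_i\cong\IZ$ with the translation action. If $I=\emptyset$ then $G_S=\IZ\in\calc$. For a finite subset $J\subseteq I$ the union $S_J:=\coprod_{i\in J}S_i$ is $\IZ$-invariant, so $G_{S_J}=\bigl(\bigast_{s\in S_J}G\bigr)\rtimes\IZ$ is a subgroup of $G_S$; since free products commute with directed colimits and the $\IZ$-action is compatible with them, $G_S=\colim_{J}G_{S_J}$ is a directed colimit over the finite subsets of $I$, so by Proposition~\ref{lem:FJinh}(\ref{lem:FJinh:colim}) it suffices to treat $I$ finite. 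Writing $\bigast_{s\in S}G=\bigl(\bigast_{s\in S_1}G\bigr)\ast\cdots\ast\bigl(\bigast_{s\in S_k}G\bigr)$ and noting that the shift automorphism respects this decomposition (each factor being $\IZ$-invariant, so the shift is the free product of the shifts on the factors), an induction on $k$ using Lemma~\ref{lem:AutFJFrProd} reduces us to the case of a single orbit $S$.

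When $S\cong\IZ$ we have $G_S=\bigl(\bigast_{n\in\IZ}G\bigr)\rtimes\IZ$ with $tg_nt^{-1}=g_{n+1}$, where $g_n$ denotes $g$ in the $n$-th free factor. Then the assignments $g\mapsto g_0$, $t\mapsto t$ and $g_n\mapsto t^ngt^{-n}$, $t\mapsto t$ define mutually inverse isomorphisms between $G\ast\IZ$ and $G_S$ (equivalently: eliminate the generators $g_n$ for $n\neq0$ by Tietze transformations, leaving a presentation of $G$ together with one extra free generator $t$ and no relation involving $t$). As $G\in\calc$ by hypothesis and $\IZ\in\calc$ by Proposition~\ref{lem:FJinh}(\ref{lem:FJinh:poly}), Proposition~\ref{lem:FJinh}(\ref{lem:FJinh:fprod}) gives $G_S\cong G\ast\IZ\in\calc$.

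When $S\cong\IZ/n$ we have $G_S=\bigl(\bigast_{i\in\IZ/n}G\bigr)\rtimes\IZ$ with $t$ acting by the cyclic shift. The element $t^n$ commutes with every $g_i$ and with $t$, so $N:=\langle t^n\rangle\cong\IZ$ is a central subgroup. Killing $N$ turns the presentation of $G_S$ into one for $G\ast\IZ/n$ (again eliminate the $g_i$ with $i\neq0$), so $G_S/N\cong G\ast\IZ/n$, which lies in $\calc$ by Proposition~\ref{lem:FJinh}(\ref{lem:FJinh:fprod}) since $\IZ/n$ is finite. I would then apply Proposition~\ref{lem:FJinh}(\ref{lem:FJinh:hom}) to the quotient map $q\colon G_S\to G_S/N$: the target is in $\calc$, and for a torsion-free cyclic subgroup $Z\leq G_S/N$ the preimage $q^{-1}(Z)$ is a central extension of $Z$ (which is trivial or infinite cyclic) by $N\cong\IZ$, hence a finitely generated abelian group, hence virtually polycyclic, so $q^{-1}(Z)\in\calc$ by Proposition~\ref{lem:FJinh}(\ref{lem:FJinh:poly}). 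Thus $G_S\in\calc$, and combining the two single-orbit cases with the reductions above finishes the proof.

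I do not expect a serious obstacle. The only point requiring care is the first reduction: $G_S$ has to be realized as a \emph{directed} colimit of honest subgroups, which forces one to colimit over unions of complete $\IZ$-orbits, so that the indexing subsets stay $\IZ$-invariant and the $G_{S_J}$ remain subgroups carrying the $\IZ$-action, rather than over arbitrary finite subsets of $S$ (which $\IZ$ need not preserve). After that the single infinite orbit is a one-line presentation manipulation and the single finite orbit is the central-extension argument above, neither of which should cause difficulty.
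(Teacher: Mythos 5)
Your proposal is correct and follows the paper's overall strategy: decompose $S$ into orbits, realize $G_S$ as a directed colimit over finite unions of orbits (Proposition~\ref{lem:FJinh}(\ref{lem:FJinh:colim})), reduce to a single orbit by iterating Lemma~\ref{lem:AutFJFrProd}, and treat the two orbit types separately, with the infinite-orbit case handled by the same isomorphism $\left(\bigast_{s\in\IZ}G\right)\rtimes\IZ\cong G\ast\IZ$. The only genuine divergence is the finite-orbit case: the paper passes to the index-$m$ subgroup $\left(\bigast_{s\in\IZ/m}G\right)\rtimes m\IZ\cong\left(\bigast_{s\in\IZ/m}G\right)\times\IZ$ and invokes Proposition~\ref{lem:FJinh}(\ref{lem:FJinh:prod}) together with closure under finite index overgroups (\ref{lem:FJinh:fext}), whereas you quotient by the central subgroup $N=\langle t^n\rangle$, identify $G_S/N\cong G\ast\IZ/n$, and apply the homomorphism criterion (\ref{lem:FJinh:hom}), noting that preimages of torsion-free cyclic subgroups are central extensions of cyclic groups by $\IZ$ and hence finitely generated abelian. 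Both arguments are valid; the paper's is marginally shorter since it needs no verification of the preimage condition, while yours trades the finite-index-overgroup property for (\ref{lem:FJinh:hom}) plus (\ref{lem:FJinh:poly}), and your remark that the colimit must run over $\IZ$-invariant finite unions of orbits (not arbitrary finite subsets) is exactly the right point of care.
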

\begin{proof} Any $\IZ$-set is the disjoint union of its orbits. Furthermore, let $\overline{S}=\{S' \subset S : S' \mbox{ is a finite union of orbits} \}$, then we have: 
\[G_S=\left(\bigast_{s\in S} G\right)\rtimes \IZ = \colim_{S'\in \overline{S}}\left(\bigast_{s\in S'} G\right)\rtimes \IZ.\]
So by Proposition~\ref{lem:FJinh}(\ref{lem:FJinh:colim}) it suffices to consider the case where $S$ has only finitely many orbits. By Lemma~\ref{lem:AutFJFrProd}  it suffices to consider the case of a single orbit only. \\
If $S$ is of the form $\IZ/m$ then the finite index subgroup $\left(\bigast_{s\in \IZ/m} G\right)\rtimes m\IZ$ is isomorphic to a direct product 
$\left(\bigast_{s\in \IZ/m} G\right)\times \IZ$ and so it satisfies the Farrell-Jones conjecture by Proposition~\ref{lem:FJinh}(\ref{lem:FJinh:prod}) and Proposition~\ref{lem:FJinh}(\ref{lem:FJinh:fext}). 
If $S$ is of the form $\IZ$ we get $\left(\bigast_{s\in \IZ} G\right)\rtimes \IZ\cong G\ast \IZ$ and  it also satisfies the $\mathbf{FJC}$ by Proposition~\ref{lem:FJinh}(\ref{lem:FJinh:prod}).
\end{proof}
\begin{lemma}\label{permut} Let $\Gamma \mathfrak{G}$ be a graph product and  $v\in V\Gamma$. Consider the graph $\Gamma'$ obtained by removing $v$ from $\Gamma$ and let $\pi : \Gamma \mathfrak{G} \to \Gamma' \mathfrak{G}$ be the canonical surjection. Then, for every cyclic subgroup $C$ its preimage $\pi^{-1}(C)$ is isomorphic to $\left(\bigast_{\Gamma' \mathfrak{G}/ \Gamma'' \mathfrak{G}} G_v\right)\rtimes C$ where $\Gamma''$ denotes the full subgraph of $\Gamma'$ consisting of all neighbors of $v\in \Gamma$.The group $C$ acts on $\bigast_{\Gamma' \mathfrak{G}/ \Gamma'' \mathfrak{G}} G_v$ by permuting the free factors.
\end{lemma}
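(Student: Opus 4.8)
The plan is to realise $\pi^{-1}(C)$ as a semidirect product and then identify the kernel by Bass--Serre theory.

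First I would observe that $\pi$ is the quotient map by the normal closure $N:=\langle\langle G_v\rangle\rangle$ of $G_v$ in $\Gamma\mathfrak{G}$, since killing $G_v$ in the presentation of $\Gamma\mathfrak{G}$ produces exactly the graph product over $\Gamma$ with $G_v$ replaced by the trivial group, i.e. $\Gamma'\mathfrak{G}$. Moreover the inclusion of the full subgraph $\Gamma'\hookrightarrow\Gamma$ induces an injection $\Gamma'\mathfrak{G}\hookrightarrow\Gamma\mathfrak{G}$ which is a section of $\pi$. Hence $\Gamma\mathfrak{G}=N\rtimes\Gamma'\mathfrak{G}$, and for the subgroup $C\leq\Gamma'\mathfrak{G}$ this gives $\pi^{-1}(C)=N\rtimes C$. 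It then remains to describe $N$ together with the conjugation action of $\Gamma'\mathfrak{G}$, and in particular of $C$, on it.

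Next I would use the standard amalgamated decomposition of a graph product along the star of a vertex. Writing $\mathrm{St}(v)$ for the full subgraph on $v$ and its neighbours $\Gamma''$, one has $\mathrm{St}(v)\mathfrak{G}=G_v\times\Gamma''\mathfrak{G}$ and $\Gamma\mathfrak{G}=(G_v\times\Gamma''\mathfrak{G})*_{\Gamma''\mathfrak{G}}\Gamma'\mathfrak{G}$. Let $T$ be the associated Bass--Serre tree; $\Gamma\mathfrak{G}$ acts on $T$ with vertex stabilisers the conjugates of $G_v\times\Gamma''\mathfrak{G}$ and of $\Gamma'\mathfrak{G}$ and edge stabilisers the conjugates of $\Gamma''\mathfrak{G}$. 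Restricting the action to $N$: since $\pi$ is a section it is injective on $\Gamma'\mathfrak{G}$, hence on every conjugate of $\Gamma'\mathfrak{G}$ and of $\Gamma''\mathfrak{G}$, so $N$ meets each such subgroup trivially; and $\pi$ restricted to $g(G_v\times\Gamma''\mathfrak{G})g^{-1}$ has kernel exactly $gG_vg^{-1}$. Thus $N$ acts on $T$ with trivial edge stabilisers, with trivial stabilisers at the $\Gamma'\mathfrak{G}$-type vertices, and with stabilisers the conjugates $gG_vg^{-1}$ at the $(G_v\times\Gamma''\mathfrak{G})$-type vertices.

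The key step is to compute the quotient graph $N\backslash T$. Using $\Gamma\mathfrak{G}=N\rtimes\Gamma'\mathfrak{G}$ one checks that the $\Gamma'\mathfrak{G}$-type vertices form a single $N$-orbit with trivial stabiliser (as $N$-sets, $\Gamma\mathfrak{G}/\Gamma'\mathfrak{G}\cong N$ with $N$ acting freely and transitively), while both the $(G_v\times\Gamma''\mathfrak{G})$-type vertices and the edges of $T$ are permuted by $N$ with orbit set in bijection with $\Gamma'\mathfrak{G}/\Gamma''\mathfrak{G}$, compatibly with incidence. Hence $N\backslash T$ is a star: one central vertex carrying the trivial group, and leaves indexed by $\Gamma'\mathfrak{G}/\Gamma''\mathfrak{G}$, each carrying a group isomorphic to $G_v$. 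Since $N\backslash T$ is a tree and all edge groups are trivial, Bass--Serre theory gives $N\cong\bigast_{\Gamma'\mathfrak{G}/\Gamma''\mathfrak{G}}G_v$. I expect this orbit-and-incidence bookkeeping — in particular seeing that the quotient graph is a tree, so that no extra free factor appears — to be the only real obstacle. Finally I would track the conjugation action: the free factor indexed by the coset $h\Gamma''\mathfrak{G}$ is the conjugate $hG_vh^{-1}$, and conjugating by $c\in\Gamma'\mathfrak{G}$ sends it to $(ch)G_v(ch)^{-1}$, the factor indexed by $ch\Gamma''\mathfrak{G}$; so $\Gamma'\mathfrak{G}$, and hence $C$, acts on $N$ by permuting the free factors via left multiplication on $\Gamma'\mathfrak{G}/\Gamma''\mathfrak{G}$. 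Combining this with the first paragraph yields $\pi^{-1}(C)=N\rtimes C\cong\left(\bigast_{\Gamma'\mathfrak{G}/\Gamma''\mathfrak{G}}G_v\right)\rtimes C$ with $C$ permuting the free factors, as claimed.
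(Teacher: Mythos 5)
Your proof is correct, and it arrives at the same structural conclusion as the paper --- $\pi^{-1}(C)=\ker\pi\rtimes C$ with $\ker\pi\cong\bigast_{\Gamma'\mathfrak{G}/\Gamma''\mathfrak{G}}G_v$, the factor indexed by $h\Gamma''\mathfrak{G}$ being $hG_vh^{-1}$, and $C$ permuting the factors via left multiplication on cosets --- but by a genuinely different route for the key step. The paper's proof is two sentences: it quotes the argument of \cite[Theorem 4.1]{HR} for the identification of the kernel, records the explicit isomorphism $g_{h\Gamma''\mathfrak{G}}\mapsto g^h$, and then notes that conjugation by $C$ visibly permutes the factors. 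You instead prove the kernel decomposition from scratch: you split $\Gamma\mathfrak{G}$ as the amalgam $(G_v\times\Gamma''\mathfrak{G})\ast_{\Gamma''\mathfrak{G}}\Gamma'\mathfrak{G}$ along the star of $v$, restrict the action on the Bass--Serre tree to $N=\ker\pi$, check (using normality of $N$ and the splitting of $\pi$) that $N$ meets edge stabilizers and $\Gamma'\mathfrak{G}$-type vertex stabilizers trivially and meets the star-type stabilizers exactly in the conjugates of $G_v$, and compute that the quotient graph is a star with leaf set $\Gamma'\mathfrak{G}/\Gamma''\mathfrak{G}$, so that no extra free factor arises. This buys a self-contained argument in the same Bass--Serre spirit as the rest of the paper (compare Lemma~\ref{lem:AutFJFrProd}), at the cost of length; the citation route is shorter and delivers the same explicit indexing of the factors, which is all that is used afterwards. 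One small point implicit in both arguments that you could make explicit: the factor $hG_vh^{-1}$ depends only on the coset $h\Gamma''\mathfrak{G}$ because $\Gamma''\mathfrak{G}$ centralizes $G_v$.
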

\begin{proof}By the argument given in the proof of \cite[Theorem 4.1]{HR} the kernel of $\pi$  is isomorphic to $\bigast_{\Gamma' \mathfrak{G}/ \Gamma'' \mathfrak{G}} G_v$ where the isomorphism is given by $g_{h\Gamma'' \mathfrak{G}}\mapsto g^h$.
The  preimage $\pi^{-1}(C)$ is isomorphic to $\left(\bigast_{\Gamma' \mathfrak{G}/ \Gamma'' \mathfrak{G}} G_v\right)\rtimes C$ where $C$ acts by conjugation on $\bigast_{\Gamma' \mathfrak{G}/ \Gamma'' \mathfrak{G}} G_v$.  Moreover, using the explicit isomorphism from above we get that $C$ permutes the free factors. 
\end{proof}
\begin{proof}(Theorem \ref{main})  Let  $v\in V\Gamma$ and let $\Gamma'$ and $\Gamma''$ be defined as above. We apply Proposition ~\ref{lem:FJinh}\eqref{lem:FJinh:hom} to the canonical surjection $\pi : \Gamma \mathfrak{G} \to \Gamma' \mathfrak{G}$. 
The result follows immediately from an application of Lemma~\ref{permut} and Lemma~\ref{lem:PermFactorsIsOK}.
\end{proof}
\begin{rem} Answering a long-standing open question Bestvina and Brady showed the existence of non-finitely presented groups of type $\mbox{FP}$ \cite{BB-97}.  Given a non-trivial right-angled Artin group $G$ associated to a flag complex $L$ there is a canonical surjection $\phi : G\twoheadrightarrow \mathbb{Z}$ given by sending each generator to $1$.  Bestvina and Brady proved that the finiteness properties of the kernel of $\phi$ are dictated by the connectivity property of $L$ and that   the kernel of $\phi$ is of type $\mbox{FP}$ if and only if is of type $\mbox{FL}$. Now consider a graph product $\Gamma\mathfrak{G}$ such that every vertex group $G_v$ surjects onto $\mathbb{Z}$.  These maps induce  a surjection $\phi_V : \Gamma\mathfrak{G} \twoheadrightarrow \mathbb{Z}$ and it would be interesting to study the finiteness property of the kernel of $\phi_V$. Note that whenever the $G_v$'s satisfy the $\mathbf{FJC}$ it follows from Theorem \ref{main} and Proposition~\ref{lem:FJinh}(\ref{lem:FJinh:sub}) that so does  $\ker{\phi_V}$. Hence $\ker{\phi_V}$ cannot solve in negative Serre's conjecture. However, there are many indicable groups for which the $\mathbf{FJC}$ is unknown, some examples are given by non-solvable Baumslag-Solitar groups (see \cite{farrell2013farrell} for the solvable ones).  
\end{rem}

\bibliographystyle{alpha}

%\bibliography{math}

\begin{thebibliography}{BLRR12}

\bibitem[BB97]{BB-97}
Mladen Bestvina and Noel Brady.
\newblock Morse theory and finiteness properties of groups.
\newblock {\em Invent. Math.}, 129(3):445--470, 1997.

\bibitem[BFL11]{bartels2011farrell}
A.~Bartels, F.T. Farrell, and W.~Lueck.
\newblock {T}he {F}arrell-{J}ones {C}onjecture for cocompact lattices in
  virtually connected {L}ie groups.
\newblock {\em arXiv preprint arXiv:1101.0469}, 2011.

\bibitem[BH99]{Bridson}
Martin~R. Bridson and Andr{\'e} Haefliger.
\newblock {\em Metric spaces of non-positive curvature}, volume 319 of {\em
  Grundlehren der Mathematischen Wissenschaften [Fundamental Principles of
  Mathematical Sciences]}.
\newblock Springer-Verlag, Berlin, 1999.

\bibitem[BL12]{bartels2012borel}
A.~Bartels and W.~L{\"u}ck.
\newblock {T}he {B}orel conjecture for hyperbolic and {C}{A}{T} (0)-groups.
\newblock {\em Ann. of Math.(2)}, 175(2):631--689, 2012.

\bibitem[BLR08]{bartels2008k}
A.~Bartels, W.~L{\"u}ck, and H.~Reich.
\newblock {T}he {K}-theoretic {F}arrell--{J}ones conjecture for hyperbolic
  groups.
\newblock {\em Inventiones mathematicae}, 172(1):29--70, 2008.

\bibitem[BLRR12]{bartels2012k}
A.~Bartels, W.~Lueck, H.~Reich, and H.~R{\"u}ping.
\newblock {K}-and {L}-theory of group rings over {G}{L}$_n$ (z).
\newblock {\em arXiv preprint arXiv:1204.2418}, 2012.

\bibitem[HR12]{HR}
Derek~F. Holt and Sarah Rees.
\newblock Generalising some results about right-angled {A}rtin groups to graph
  products of groups.
\newblock {\em J. Algebra}, 371:94--104, 2012.

\bibitem[K{\"u}h09]{2009ph}
P.~K{\"u}hl.
\newblock {Isomorphismusvermutungen und 3-Mannigfaltigkeiten}.
\newblock {\em ArXiv e-prints}, July 2009.

\bibitem[LR05]{BC}
Wolfgang L{\"u}ck and Holger Reich.
\newblock The {B}aum-{C}onnes and the {F}arrell-{J}ones conjectures in {$K$}-
  and {$L$}-theory.
\newblock In {\em Handbook of {$K$}-theory. {V}ol. 1, 2}, pages 703--842.
  Springer, Berlin, 2005.

\bibitem[Ser80]{serre1980trees}
Jean-Pierre Serre.
\newblock {\em Trees}.
\newblock Springer-Verlag, Berlin, 1980.
\newblock Translated from the French by John Stillwell.

\end{thebibliography}


\begin{thebibliography}{10}
\bibitem[BB97]{BB-97}
Mladen Bestvina and Noel Brady.
\newblock Morse theory and finiteness properties of groups.
\newblock {\em Invent. Math.}, 129(3):445--470, 1997.

\bibitem[BFL11]{bartels2011farrell}
A.~Bartels, F.T. Farrell, and W.~L{\"u}ck.
\newblock {T}he {F}arrell-{J}ones {C}onjecture for cocompact lattices in
  virtually connected {L}ie groups.
\newblock {\em arXiv preprint arXiv:1101.0469}, 2011.

\bibitem[BL12]{bartels2012borel}
A.~Bartels and W.~L{\"u}ck.
\newblock {T}he {B}orel conjecture for hyperbolic and {C}{A}{T} (0)-groups.
\newblock {\em Ann. of Math.(2)}, 175(2):631--689, 2012.

\bibitem[BLR08]{bartels2008k}
A.~Bartels, W.~L{\"u}ck, and H.~Reich.
\newblock {T}he {K}-theoretic {F}arrell--{J}ones conjecture for hyperbolic
  groups.
\newblock {\em Inventiones mathematicae}, 172(1):29--70, 2008.

\bibitem[BLRR12]{bartels2012k}
A.~Bartels, W.~L{\"u}ck, H.~Reich, and H.~R{\"u}ping.
\newblock {K}-and {L}-theory of group rings over {G}{L}$_n$(Z).
\newblock {\em arXiv preprint arXiv:1204.2418}, to appear in Publ. Math. IHES.

\bibitem[FW13]{farrell2013farrell}
Tom Farrell and Xiaolei Wu.
\newblock Farrell-jones conjecture for the solvable baumslag-solitar groups.
\newblock {\em arXiv preprint arXiv:1304.4779}, 2013.

\bibitem[HR12]{HR}
Derek~F. Holt and Sarah Rees.
\newblock Generalising some results about right-angled {A}rtin groups to graph
  products of groups.
\newblock {\em J. Algebra}, 371:94--104, 2012.

\bibitem[K{\"u}h09]{2009ph}
P.~K{\"u}hl.
\newblock {Isomorphismusvermutungen und 3-Mannigfaltigkeiten}.
\newblock {\em ArXiv e-prints}, July 2009.

\bibitem[LR05]{BC}
Wolfgang L{\"u}ck and Holger Reich.
\newblock The {B}aum-{C}onnes and the {F}arrell-{J}ones conjectures in {$K$}-
  and {$L$}-theory.
\newblock In {\em Handbook of {$K$}-theory. {V}ol. 1, 2}, pages 703--842.
  Springer, Berlin, 2005.

\bibitem[Ser80]{serre1980trees}
Jean-Pierre Serre.
\newblock {\em Trees}.
\newblock Springer-Verlag, Berlin, 1980.
\newblock Translated from the French by John Stillwell.
\end{thebibliography}
\end{document}